\newcommand{\Zz}{\mathbb{Z}}
\newcommand{\Pp}{\mathbb{P}}
\newcommand{\rk}{\operatorname{rank}}
\newcommand{\Oo}{\mathcal{O}}
\newcommand{\Ll}{\mathcal{L}}
\tikzset{
>=stealth',
  punktchain/.style={
    rectangle,
    rounded corners,
    % fill=black!10,
    draw=black, thick,
    minimum height=3em,
    text centered,
    on chain},
  line/.style={draw, thick, <-},
  element/.style={
    tape,
    top color=white,
    bottom color=blue!50!black!60!,
    minimum width=8em,
    draw=blue!40!black!90, very thick,
    text width=10em,
    minimum height=3.5em,
    text centered,
    on chain},
  every join/.style={->, thick,shorten >=1pt},
  decoration={brace},
  tuborg/.style={decorate},
  tubnode/.style={midway, right=2pt},
}
\def\deg{\mathop{\mathrm{deg}}}
\def\dim{\mathop{\mathrm{dim}}\nolimits}
\def\Ext{\mathop{\mathrm{Ext}}\nolimits}
\def\ext{\mathop{\mathrm{ext}}\nolimits}
\def\Hilb{\mathop{\mathrm{Hilb}}\nolimits}
\def\Num{\mathop{\mathrm{Num}}\nolimits}
\def\rk{\mathop{\mathrm{rk}}}
\def\MG13{\ensuremath{{\mathcal M}_{\Gamma_1(3)}}}
\def\tildeMG13{\ensuremath{\widetilde{\mathcal M}_{\Gamma_1(3)}}}
\def\into{\ensuremath{\hookrightarrow}}
\newtheorem*{rep@theorem}{\rep@title}
\newcommand{\newreptheorem}[2]{%
\newenvironment{rep#1}[1]{%
 \def\rep@title{#2 \ref{##1}}%
 \begin{rep@theorem}}%
 {\end{rep@theorem}}}
\newtheorem{Thm}{Theorem}[section]
\newtheorem{Prop}[Thm]{Proposition}
\newtheorem{Lem}[Thm]{Lemma}
\newtheorem{Cor}[Thm]{Corollary}
\newtheorem{Con}[Thm]{Conjecture}
\newtheorem{thm-int}{Theorem}
\theoremstyle{definition}
\newtheorem{Def-s}[Thm]{Definition}
\newtheorem{Rem}[Thm]{Remark}
\def\P{\ensuremath{\mathbb{P}}}
\def\Z{\ensuremath{\mathbb{Z}}}
\def\F{\ensuremath{\Delta}}
\def\CC{\ensuremath{\mathcal C}}
\def\DD{\ensuremath{\mathcal D}}
\def\EE{\ensuremath{\mathcal E}}
\def\FF{\ensuremath{\mathcal F}}
\def\GG{\ensuremath{\mathcal G}}
\def\LL{\ensuremath{\mathcal L}}
\def\MM{\ensuremath{\mathcal M}}
\def\OO{\ensuremath{\mathcal O}}
\newcommand{\ignore}[1]{}
\begin{document}

\title[Ulrich Bundles on Enriques Surfaces]{Ulrich Bundles on Enriques Surfaces}

\author{Lev Borisov}
\address{Department of Mathematics, Rutgers University, 110 Frelinghuysen Rd., Piscataway, NJ 08854, USA}
\email{borisov@math.rutgers.edu}
\urladdr{http://math.rutgers.edu/~borisov/}

\author{Howard Nuer}
\address{Department of Mathematics, Rutgers University, 110 Frelinghuysen Rd., Piscataway, NJ 08854, USA}
\email{hjn11@math.rutgers.edu}
\urladdr{http://math.rutgers.edu/~hjn11/}

\keywords{
}

\subjclass[2010]{}

\begin{abstract}We study Ulrich bundles and their moduli on unnodal Enriques surfaces.  In particular, we prove that unnodal Enriques surfaces are of wild representation type by constructing moduli spaces of stable Ulrich bundles of arbitrary rank and arbitrarily large dimension.
\end{abstract}

\vspace{-1em}

\maketitle

\tableofcontents
\section{Introduction}
Since Horrocks proved his seminal result \cite{Hor64} that ACM (Arithmetically Cohen-Macaulay) bundles on $\P^n$ are direct sums of line bundles, ACM sheaves on a projective variety have been intensely studied.  In addition to having important connections to the study of deformation theory (for example, to codimension 2 subschemes of $\P^n$ with ACM structure sheaves \cite{Ell75}), the study of ACM sheaves provides an important tool for determining the complexity of the underlying variety in terms of the dimension and number of families of indecomposable ACM sheaves that it supports.  This complexity is called the representation type of the variety \cite{MR15}.  

\medskip
Varieties admitting only a finite number of indecomposable ACM sheaves (up to twist and isomorphism) are called of \emph{finite representation type} and have been completely classified \cite{Yos90} : $[Z]\in\Hilb^n(\P^2)$ with $Z$ reduced and $n\leq 3$, $\P^n$, a smooth quadric hypersurface $X\subset\P^n$, a cubic scroll in $\P^4$, the Veronese surface in $\P^5$, or a rational normal curve.  At the other extreme are varieties of \emph{wild representation type} for which there exist families of non-isomorphic indecomposable ACM sheaves of arbitrarily large dimension.  Varieties of \emph{tame representation type} fit between these two extremes and for each rank $r$ admit only finitely many moduli spaces of indecomposable ACM sheaves of rank $r$, whose dimensions do not exceed one.

\medskip
Smooth projective curves fit perfectly into this trichotomy based on the genus $g$:

\begin{table}[ht]
\begin{tabular}{|c| c| c|}
\hline
$g=0$ & $g=1$ & $g\geq 2$\\
\hline
Finite & Tame & Wild\\
\hline
\end{tabular}
\end{table}

In higher dimension we cannot expect such a simple trichotomy to hold.  Indeed, a quadric cone in $\P^3$ exhibits an infinite discrete set indecomposable ACM bundles of rank 2 \cite{CH04}.  Nevertheless, the definition of varieties of wild representation type continues to make sense in higher dimension although few examples are known.  

\medskip
In this paper we study a particular kind of ACM bundle whose associated graded module has the maximal number of generators.  Although the algebraic counterpart of this phenomenon was first observed by Ulrich in \cite{Ulr84}, these so-called Ulrich sheaves on projective varieties were originally introduced in \cite{ES03}.  On a $d$-dimensional projective variety $X\subset\P^n$, an Ulrich sheaf $E$ can equivalently be described as a coherent $\OO_X$-module admitting a linear resolution as an $\OO_{\P^n}$-module, or as an $\OO_X$-module such that $\pi_*(E)\cong\OO_{\P^d}^{\deg(X)\rk(E)}$ for a generic projection $\pi:X\to\P^d$.  The most convenient definition for us, however, is that $E(-i)$ is acyclic for $i=1,\dots,d$, i.e. $H^j(E(-i))=0$ for $i=1,\dots,d$ and all $j$.  These equivalences are the content of \cite[Proposition 2.1]{ES03}.

\medskip
Ulrich sheaves were originally studied in connection with Chow forms of subvarieties of $\P^n$ \cite{ES03}, but since their introduction they have been shown to be intimately connected to the minimal resolution conjecture (MRC) \cite{AFO12,FMP03,EPSW02}, representations of Clifford algebras \cite{CKM12}, and Boij-S\"{o}derberg Theory \cite{ES11}.  Indeed, in \cite{ES11} it is shown that the \emph{cohomology table} $\CC(X,\OO_X(1))$ of a $d$-dimensional projective variety $X\subset \P^n$ is the same as that of $\P^d$ if and only if $X$ admits an Ulrich sheaf.  Furthermore, Ulrich sheaves are extremal rays in $\CC(X,\OO_X(1))$.

\medskip
Although Ulrich sheaves are conjectured to exist on every projective variety \cite{ES03}, Ulrich bundles have only been constructed in a few cases: smooth curves \cite{ES03}, complete intersections \cite{HUB91}, Grassmannians \cite{CMR15} and some partial Flag varieties \cite{CCHMRW15}, del Pezzo surfaces \cite[Corollary 6.5]{ES03} and more general rational surfaces with an anticanonical pencil \cite{Kim16}, K3 surfaces \cite{AFO12}, and abelian surfaces \cite{Bea15}.  In this short note we will study Ulrich bundles on a generic (more specifically: unnodal) Enriques surface with respect to multiples of the Fano polarization.

\medskip
In Section \ref{sec: Ulrich line bundles} we prove that Ulrich line bundles exist for any polarization proportional to the Fano polarization $\Delta$ of an unnodal Enriques surface $Y$. We also formulate an attractive lattice theoretic conjecture that would ensure that Ulrich line bundles exist for all polarizations.

\medskip
In Section \ref{sec: moduli} we prove that Enriques surfaces are of wild representation type by constructing stable Ulrich bundles of any rank, whose moduli spaces have increasingly large dimension.

\medskip
By using a particularly convenient resolution of the cotangent bundle $\Omega_Y$ that comes from the description of an unnodal Enriques surface $Y$ as the \'{e}tale quotient of a $(2,2,2)$ divisor in $(\P^1)^3$, we prove in Section \ref{sec: rank 2} that $\Omega_Y(3\Delta)$ is a stable Ulrich bundle of rank two with respect to the polarization $H=2\Delta$.  We include a proof of this torically motivated description of $Y$ in an appendix (Section \ref{app: toric}).

\medskip
{\bf Acknowledgements.} L.B. has been partially supported by the  NSF grant DMS-1201466.

\section{Ulrich line bundles on unnodal Enriques surfaces}\label{sec: Ulrich line bundles}
Let $X$ be smooth projective surface.  Recall that with respect to a very ample polarization $H$, a vector bundle $\EE$ on $X$ is called \emph{Ulrich}
if and only if 
\begin{equation}\label{u}
H^i(X,\EE(-H)) = H^i(X,\EE(-2H))=0
\end{equation}
for all $i$. 

\medskip
The following lemma describes a sufficient and necessary condition for a line bundle $\Oo(D)$ on an unnodal\footnote{An Enriques surface is said to be \emph{unnodal} if it contains no smooth rational curves and \emph{nodal} otherwise.} Enriques surface to
be acyclic, in the sense that all of its cohomology groups vanish.
\begin{Prop}\label{acyclic} 
Let $Y$ be an unnodal Enriques surface.
The condition $H^i(Y,\Oo(D))=0$ for all $i$ is equivalent to $D^2=-2$.
\end{Prop}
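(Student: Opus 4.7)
The plan is to argue via Riemann--Roch together with the hypothesis that $Y$ carries no smooth rational curves. The ``only if'' direction is cheap, while the ``if'' direction reduces to showing that no divisor of self-intersection $-2$ can be effective on an unnodal Enriques surface.

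\medskip

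First, recall that on an Enriques surface the canonical class $K_Y$ is $2$-torsion, so in particular numerically trivial, which gives $K_Y^2 = 0$ and $D \cdot K_Y = 0$. Combined with $\chi(\mathcal{O}_Y) = 1$, Riemann--Roch simplifies to
\[
\chi(\mathcal{O}(D)) \;=\; 1 + \tfrac{1}{2} D^2.
\]
So if all cohomology of $\mathcal{O}(D)$ vanishes then $\chi = 0$, forcing $D^2 = -2$. This settles necessity.

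\medskip

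For sufficiency, assume $D^2 = -2$. I would like to show $h^0(D) = h^2(D) = 0$; the vanishing of $h^1(D)$ then follows from $\chi(\mathcal{O}(D)) = 0$. By Serre duality $h^2(D) = h^0(K_Y - D)$, and a direct computation gives $(K_Y - D)^2 = K_Y^2 - 2 K_Y \cdot D + D^2 = -2$ as well. So it suffices to prove:
\begin{center}
\emph{on an unnodal Enriques surface, no divisor class of self-intersection $-2$ is effective.}
\end{center}

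\medskip

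This is the heart of the argument, and where the unnodal hypothesis enters. I would argue as follows. Let $C$ be an integral curve on $Y$. By adjunction $C^2 = 2 p_a(C) - 2 \geq -2$, with equality only if $p_a(C) = 0$; but an integral curve of arithmetic genus $0$ on a smooth surface is necessarily a smooth rational curve, which is excluded on an unnodal $Y$. Hence every integral curve satisfies $C^2 \geq 0$. Now write an effective divisor as $D = \sum n_i C_i$ with $n_i > 0$ and $C_i$ distinct integral curves. Then
\[
D^2 \;=\; \sum_i n_i^2 C_i^2 + 2 \sum_{i < j} n_i n_j\, C_i \cdot C_j,
\]
where each $C_i^2 \geq 0$ by the above, and each $C_i \cdot C_j \geq 0$ for $i \neq j$ because distinct integral curves on a surface meet non-negatively. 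Hence any effective divisor on $Y$ has $D^2 \geq 0$, ruling out effectivity once $D^2 = -2$.

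\medskip

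Applying this to both $D$ and $K_Y - D$ yields $h^0(D) = h^2(D) = 0$, hence $h^1(D) = 0$ by Riemann--Roch, completing the proof. The main (and really only) obstacle is the effectivity step, which is where one must exploit ``unnodal'' to exclude integral curves of square $-2$; everything else is formal from Riemann--Roch and Serre duality on a surface with numerically trivial canonical class.
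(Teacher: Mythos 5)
Your proof is correct and follows essentially the same route as the paper: Riemann--Roch with $K_Y$ numerically trivial for necessity, and for sufficiency the reduction to $h^0(D)=h^0(K_Y-D)=0$ via the observation that an effective divisor of negative self-intersection on a surface must contain an integral component of negative square, which by adjunction would be a smooth rational curve, contradicting unnodality. The only difference is cosmetic: you phrase the key step as ``every effective divisor on an unnodal $Y$ has nonnegative square,'' while the paper runs the same decomposition argument directly on the putative member of $|D|$.
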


\begin{proof}
The vanishing of cohomology implies that $\chi(\Oo(D))=0$ and thus $D^2=-2$ by the Riemann-Roch theorem. In the other direction, $D^2=-2$ implies $\chi(\OO(D))=0$. Thus, it suffices to show $h^0(D)=h^2(D)=0$. If $h^0(D)>0$, then there is a smooth rational curve on $Y$, contradicting unnodality.  Indeed, let $C=\sum m_i R_i$, with $m_i>0$ and $R_i$ integral, be the decomposition of an effective curve $C\in |D|$ into irreducible components.  Then as $R_i.R_j\geq 0$ for $i\neq j$, we conclude from $C^2=-2$ that there must be some index $i$ for which $R_i^2<0$.  It follows that $0\leq p_a(R_i)=\frac{1}{2}R_i^2+1<1$, i.e. $R_i$ is a smooth rational curve (with $R_i^2=-2$).  The statement for $h^2(D)=h^0(K_Y-D)$ is identical.
\end{proof}

In the case of a line bundle on an unnodal Enriques surface $Y$, Proposition \ref{acyclic} allows us to reformulate the general Ulrich bundle condition \eqref{u} strictly in terms of the intersection pairing on $\Num(Y)$. Specifically, for $\EE=\Oo(D)$, the Ulrich condition is equivalent to the existence of $D_1,D_2\in\Num(Y)$ such that
\begin{equation}\label{ulrich}
D_1^2=D_2^2=-2, ~D_1-D_2=H
\end{equation}
where $D_1=D-H$ and $D_2=D-2H$.

\medskip
The following purely lattice theoretic conjecture is appealing but we cannot presently prove it.
Recall that up to torsion the cohomology of $Y$ with the intersection pairing is the even unimodular 
lattice $\Lambda$ of signature $(1,9)$ given by $U\oplus {\mathbb E}_8(-1)$ where $U$ is the hyperbolic plane.
\begin{Con}\label{lat}
Any element $H\in \Lambda$ can be written as a difference of elements $D_1$ and $D_2$ of self-intersection $(-2)$.
\end{Con}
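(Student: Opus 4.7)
\medskip
\noindent\emph{Proof sketch.} The plan is to decouple the two root conditions into a single constraint on one root, and then to put $H$ into a standard form where the construction is transparent. Substituting $D_2 := D_1 - H$, the pair $D_1^2 = D_2^2 = -2$ collapses to the single system
\[
D_1^2 = -2, \qquad D_1 \cdot H = \tfrac{1}{2}H^2,
\]
and the right-hand side lies in $\Zz$ because $\Lambda$ is even. So it suffices to produce a single root $D_1 \in \Lambda$ with a prescribed inner product against $H$.

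To produce such a root, I would invoke the classical Eichler--Wall theorem on orbits in indefinite unimodular lattices of rank $\geq 3$: in $\Lambda$, the $O(\Lambda)$-orbit of a primitive vector is determined by its square. Writing $H = k H'$ with $H' \in \Lambda$ primitive and $k \in \Zz_{\geq 0}$, and setting $n := \tfrac{1}{2}(H')^2$, I may therefore assume (after applying an isometry of $\Lambda$) that
\[
H = k\, e + k n\, f,
\]
where $\{e,f\}$ is the standard hyperbolic basis of $U \subset \Lambda$ (so $e^2 = f^2 = 0$, $e \cdot f = 1$). In these coordinates $\tfrac{1}{2} H^2 = k^2 n$, and I can take any root $w$ of the ${\mathbb E}_8(-1)$ summand and set
\[
D_1 := k\, e + w, \qquad D_2 := D_1 - H = -k n\, f + w.
\]
Since $e, f$ are isotropic and orthogonal to $w$, one computes directly $D_1^2 = w^2 = -2$, $D_2^2 = w^2 = -2$, and $D_1 - D_2 = H$ by construction. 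The degenerate case $H = 0$ is handled by taking $D_1 = D_2$ equal to any root.

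The hardest step is really just the invocation of Eichler--Wall. A hands-on attempt, decomposing $H = ae + bf + h_E$ with $h_E \in {\mathbb E}_8(-1)$ and solving directly for $D_1 = \alpha e + \beta f + w$, runs into a combined Diophantine constraint on $w$: one needs $w \in {\mathbb E}_8(-1)$ of a prescribed negative square \emph{and} with a prescribed integer inner product against the fixed $h_E$. Each condition alone is easily met in ${\mathbb E}_8(-1)$, but realizing both simultaneously is essentially as hard as the orbit statement one is trying to avoid. I therefore expect the orbit reduction to be the cleanest route; any elementary alternative will likely require a case analysis on the divisibility $k$ and on $H^2 \pmod 4$, with non-trivial bookkeeping.
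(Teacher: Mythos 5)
The statement you are proving is presented in the paper as Conjecture \ref{lat}, which the authors explicitly say they \emph{cannot} prove; the paper only establishes the case $H=k\Delta$ (Theorem \ref{Ulrich line bundle}). Your opening reduction is fine: setting $D_2=D_1-H$, it suffices to find a root $D_1$ with $D_1\cdot H=\tfrac{1}{2}H^2$. But the step you yourself identify as the crux --- that the $O(\Lambda)$-orbit of a primitive vector of $\Lambda=U\oplus E_8(-1)$ is determined by its square --- is false, and this failure is exactly where the difficulty of the conjecture lives. Eichler's criterion requires the ambient lattice to contain $U\oplus U$, which $\Lambda$ cannot for signature reasons, and no substitute holds here: for a primitive $H'$ with $(H')^2>0$ the orthogonal complement $(H')^{\perp}$ is \emph{negative definite} of rank $9$, its genus may contain several isomorphism classes, and each class gives a distinct $O(\Lambda)$-orbit. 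Concretely, the invariant $\phi(H')=\min\{H'\cdot F : F \mbox{ primitive isotropic},\, H'\cdot F>0\}$ separates orbits: your normal form $e+nf$ always has $\phi=1$ (pair against $f$), whereas the Fano class satisfies $\Delta^2=10$ and $\phi(\Delta)=3$ (this is the standard characterisation of a Fano polarization; note $\Delta\cdot E_i=3$ for the ten isotropic classes $E_i$). Hence $\Delta$ is not in the orbit of $e+5f$, and your reduction already breaks for the one case the paper actually needs. For another instance, primitive vectors of square $4$ fall into at least two orbits, with orthogonal complements $E_8(-1)\oplus\langle -4\rangle$ (realised by $v=e+2f$) and $D_9(-1)$: both are even, negative definite of rank $9$ with isomorphic discriminant forms, but they have $240$ and $144$ roots respectively, so they are not isomorphic, and by Nikulin's theory of overlattices both occur.

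The Diophantine obstruction you flag in your final paragraph --- producing $w$ in a fixed negative definite lattice with prescribed square \emph{and} prescribed pairing against a fixed vector --- is therefore not circumvented by the orbit reduction; it is the genuine content of the problem. This is consistent with how the paper handles the one case it can do: for $H=k\Delta$ the authors work inside the explicit anisotropic sublattice spanned by $E_1-E_2,\dots,E_7-E_8$, reduce the two root conditions to representing an explicit positive integer (e.g.\ $\tfrac{1}{4}(5k^2-17)$ for odd $k\geq 3$) as a sum of four squares, and invoke Lagrange's theorem. As written, your argument does not prove the conjecture, and the general case remains open.
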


\medskip
The particular case of Conjecture \ref{lat} when $H$ is a class of a very ample line bundle would imply the existence of Ulrich line bundles on unnodal Enriques surfaces with arbitrary polarization. We believe 
the conjecture is true in general, having checked it on a number of small examples. In what follows, we will prove the conjecture for $H$ a multiple of the Fano polarization $\Delta$.

\medskip
To describe the Fano polarization $\Delta$, we recall a presentation of $\Lambda$ from \cite[Proposition 2.5.5]{CD89}
$$
\Lambda =\left \{\sum_{i=1}^{10} a_i E_i\mid~a_i\in \frac 13 \Zz, a_i-a_j \in \Zz\right\}
$$
with the pairings $E_i.E_j=1-\delta_{ij}$.
The Fano class $\Delta$ is given by 
$$
\Delta=\frac 13\sum_{i=1}^{10} E_i.
$$
Together with $E_i$, the element $\Delta$ generates $\Lambda$. It satisfies
$$
\Delta^2=10, ~\Delta.E_i = 3 \mbox{~for all $i$.}
$$

\begin{Rem}
The importance of $\Delta$ is highlighted by the fact that it gives the embedding of $Y$ of the lowest possible degree ($10$) and in the smallest projective space ($\P^5$) \cite{CD85}.
\end{Rem}

The main result of this section is the following.
\begin{Thm}\label{Ulrich line bundle}
For any $k\in \Zz$ there exist elements $D_1$ and $D_2$ of $\Lambda$ such that 
$D_1^2=D_2^2=-2$ and $D_1-D_2=k\Delta$.  Consequently, there exist Ulrich line bundles with respect to $k\Delta$ for any $k\geq 1$.
\end{Thm}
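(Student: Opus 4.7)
The plan is to reduce both assertions to an elementary Diophantine problem on the coordinates of $D_1$ in the $E_i$-basis and to solve it by three explicit families indexed by $k \bmod 3$. Using the presentation of $\Lambda$ recalled above, write $D_1=\sum a_iE_i$ with $a_i\in\tfrac13\Zz$ and $a_i-a_j\in\Zz$, and set $c_i=3a_i\in\Zz$ (so the $c_i$ are integers all congruent mod $3$). The pairing $E_i.E_j=1-\delta_{ij}$ gives $D^2=(\sum a_i)^2-\sum a_i^2$, so $D_1^2=-2$ is equivalent to $\sum c_i^2=(\sum c_i)^2+18$. Since $\Delta=\tfrac13\sum E_i$, the condition $D_1-D_2=k\Delta$ corresponds to the uniform shift $c_i\mapsto c_i-k$. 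Writing the analogous equation for $D_2$ and subtracting forces (for $k\neq 0$) $\sum c_i=5k$ and hence $\sum c_i^2=25k^2+18$. Thus the theorem reduces to producing integers $c_1,\dots,c_{10}$, all congruent mod $3$, with these two prescribed moments.

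Next, I would exhibit such a tuple in each residue class of $k \bmod 3$, using an ansatz with one ``large'' coordinate equal to $5k$ and nine ``small'' coordinates that sum to zero and contribute exactly $18$ to the sum of squares. Explicitly, for $k\equiv 0\pmod 3$ take $(c_1,\dots,c_{10})=(5k,3,-3,0,0,0,0,0,0,0)$; for $k\equiv 1\pmod 3$ take $(5k,2,2,2,-1,-1,-1,-1,-1,-1)$; and for $k\equiv 2\pmod 3$ take $(5k,-2,-2,-2,1,1,1,1,1,1)$. In each case all entries lie in the common residue class $2k \bmod 3$, and the required sum and sum of squares are immediate. The case $k=0$ is handled by $D_1=D_2=E_1-E_2$, which has self-intersection $-2$.

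For the Ulrich statement, the Fano polarization $\Delta$ is very ample on an unnodal Enriques surface \cite{CD85}, so $H=k\Delta$ is a very ample polarization for every $k\geq 1$. Given $D_1,D_2\in\Lambda$ as constructed, choose any lift $D\in\Pic(Y)$ of $D_1+k\Delta$; then $\OO(D-H)$ and $\OO(D-2H)$ have numerical classes $D_1$ and $D_2$ respectively, both of self-intersection $-2$. By Proposition~\ref{acyclic} both line bundles are acyclic on $Y$, so $\OO(D)$ satisfies the Ulrich condition \eqref{u} and is therefore an Ulrich line bundle with respect to $k\Delta$.

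The main obstacle is identifying the three-case ansatz: the requirement that all $c_i$ be congruent mod $3$ forbids the simplest patterns (for instance, using any zero entries when $k\not\equiv 0\pmod 3$), so the three families must be tuned separately to the residue class of $k$. Once these tuples are written down the verifications are routine, and everything else in the argument is a direct assembly of Proposition~\ref{acyclic} with the standard numerical translation of the Ulrich condition.
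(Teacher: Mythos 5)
Your proof is correct, and it takes a genuinely different route from the paper's. Both arguments reduce the statement to a Diophantine problem in the $E_i$-coordinates, but the paper proceeds by an ansatz of the form $D_1=\tfrac{k}{2}\Delta+\sum_{i=1}^4 c_i(E_{2i-1}-E_{2i})$ (corrected by an auxiliary class $L=3E_9+2E_{10}-\Delta$ when $k$ is odd), which turns the condition $D_1^2=D_2^2=-2$ into representing a single integer as a sum of four squares; the key input is Lagrange's four-square theorem, and the cases even $k$, odd $k\geq 3$, and $k=1$ must be treated separately because the relevant integer fails to be a nonnegative integer uniformly. You instead normalize coordinates by $c_i=3a_i$, observe that the two self-intersection conditions force $\sum c_i=5k$ and $\sum c_i^2=25k^2+18$ with all $c_i$ congruent mod $3$, and then write down completely explicit solutions in each residue class of $k$ modulo $3$; the verifications are one-line computations and no nontrivial number theory is needed. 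Your version is more elementary and fully explicit (and handles $k=1$ without a special case), while the paper's Lagrange-based method is closer in spirit to what one would try for the general Conjecture \ref{lat}, where a fixed explicit pattern is less likely to be available. The deduction of the Ulrich statement from Proposition \ref{acyclic} is the same in both arguments and is carried out correctly.
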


\begin{proof}
It is clear that the case $k<0$ can be reduced to $k>0$ by switching $D_1$ and $D_2$.  

\medskip
For even $k\geq 0$ and $c_1,\dots, c_4\in\Z$, we consider
$$D_1 = \frac 12 k\F + c_1 (E_1-E_2) + c_2(E_3-E_4) + c_3(E_5-E_6) + c_4 (E_7-E_8),\mbox{ and }$$
$$D_2 = -\frac 12 k\F+ c_1 (E_1-E_2) + c_2(E_3-E_4) + c_3(E_5-E_6) + c_4 (E_7-E_8).
$$
It is clear that $D_1-D_2=k\F$ and that
$$D_1^2= D_2^2 = \frac 52 k^2 -2(c_1^2+c_2^2+c_3^2+c_4^2),$$ so by Lagrange's four-square
theorem we can pick $c_1,\dots, c_4$ to make $D_i^2=-2$.

\medskip
To deal with odd $k\geq 3$, consider
 the divisor class $L = 3E_{9}+2E_{10}-\F$.  We have $L.\F  = 15-10  = 5$, and 
$L^2 = 12 - 30 + 10 = -8$.
We will look for solutions to $D_1^2=D_2^2=-2, D_1-D_2=k\F$ of the form
$$
D_1 = \frac{k+1}{2}\F - L + c_1(E_1-E_2) +\cdots+c_4(E_7-E_8),~
D_2 = -\frac{k-1}{2}\F -L + c_1(E_1-E_2) + \cdots + c_4(E_7-E_8).$$
We have $D_1-D_2 = k\F$, and note that $(E_1-E_2),\ldots,(E_7-E_8)$ are mutually orthogonal, as well as orthogonal to $\F$ and $L$. The conditions $D_1^2=-2$ and $D_2^2=-2$ translate into
$$-2 = \frac 52(k+1)^2 - 5(k+1) -8 -2(c_1^2+c_2^2+c_3^2+c_4^2)$$
$$-2 = \frac 52(k-1)^2 +5(k-1) -8 - 2(c_1^2+c_2^2+c_3^2+c_4^2)$$
which are both equivalent to
$$c_1^2+c_2^2+c_3^2+c_4^2=\frac 14(5k^2-17).$$
The right hand side is a positive integer which can again be written
as a sum of four squares by Lagrange's theorem.

\medskip
It remains to consider the case $k=1$.
Here we provide a solution explicitly as 
$$
D_1=\frac 53 E_1 + \frac 23 \sum_{i=2}^4 E_i - \frac  13\sum_{i=5}^{10}E_i,
~
D_2=\frac 43 E_1 + \frac 13 \sum_{i=2}^4 E_i - \frac  23\sum_{i=5}^{10}E_i.
$$
\end{proof}

\begin{Rem}\label{rem: finite}
It is easy to show that for any $H$ with $H^2>0$ the number of solutions to the Ulrich equations \eqref{ulrich} is finite. Indeed, the sum $F=D_1+D_2$ lies in the orthogonal complement of $H$, which is 
a negative definite lattice. It satisfies $F^2= 2D_1^2+2D_2^2 - (D_1-D_2)^2 = -8-H^2$, which has only a finite number of solutions. Since $D_1-D_2=H$, we have only a finite number of pairs
$(D_1,D_2)=(\frac{F+H}{2},\frac{F-H}{2})$. In the case $H=\Delta$ and $H=2\Delta$ these calculations can be performed by hand. 
For $H=\F$, the resulting Ulrich divisors $D=D_1+H$, up to permutations of the $E_i$, are
$$
D=2E_1+E_2+E_3+E_4 ~\mbox{or~} D= E_1+\cdots +E_6-E_7.
$$
For $H=2\F$, all possible $D=D_1+H$ up to permutations are $\sum_i a_iE_i$ with $(a_1,\ldots,a_{10})$ in 
the set
\begin{align*}
 \{
(4,1,1,1,1,1,1,0,0,0), (3,3,1,1,1,1,0,0,0,0),(3,2,2,2,1,0,0,0,0,0),(3,2,2,1,1,1,1,0,0,-1),
\\
(2,2,2,2,2,1,0,0,0,-1),(2,2,2,2,1,1,1,1,-1,-1),(2,2,2,1,1,1,1,1,1,-2)
\}.
\end{align*}
We leave the details to the reader.
\end{Rem}

\section{Higher rank Ulrich bundles and their moduli}\label{sec: moduli}
In this section we prove the existence of stable Ulrich bundles of arbitrary rank with respect to the Fano polarization $H=\F$ on an unnodal Enriques surface $Y$.  As these bundles move in moduli spaces of arbitrarily large dimension, it will follow that unnodal Enriques surfaces are of wild representation type.  Before we get to the construction, we prove a higher rank finiteness result in analogy with Remark \ref{rem: finite}.   

\begin{Lem}  For any polarization $H$ and rank $r$, there exist only finitely many pairs $(c_1,c_2)$ for the Chern classes of vector bundle $E$ that is Ulrich with respect to $H$.
\end{Lem}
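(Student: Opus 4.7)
The plan is to turn the Ulrich vanishings into Riemann--Roch constraints, use them to pin $c_1\cdot H$ and to express $c_2$ as a function of $c_1^2$, then bound $c_1^2$ using the Hodge index theorem and Bogomolov's inequality, and finally finish by a lattice argument in $\Num(Y)$. The rank--one case is Remark \ref{rem: finite}, so I assume $r\ge 2$ throughout.

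On an Enriques surface $\chi(\Oo_Y)=1$ and $K_Y$ is numerically trivial, so Riemann--Roch for a rank $r$ vector bundle $E$ with Chern classes $c_1,c_2$ reads
\[
\chi(E(nH)) \;=\; r+\tfrac12 c_1^2 - c_2 + n(c_1\cdot H) + \tfrac{n^2 r}{2} H^2.
\]
The Ulrich conditions $\chi(E(-H))=\chi(E(-2H))=0$ are then a pair of linear equations in $(c_1\cdot H,c_2)$; subtracting them yields $c_1\cdot H=\tfrac{3r}{2}H^2$ (an integer, since the Enriques lattice is even and hence $H^2\in 2\Z$), and back--substitution gives $c_2=r+\tfrac12 c_1^2 - rH^2$. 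Thus $c_1\cdot H$ is pinned and $c_2$ is determined by $c_1^2$, reducing the problem to showing that $c_1^2$ lies in a bounded set.

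For the upper bound, Hodge index applied to $c_1$ gives $c_1^2\cdot H^2\le (c_1\cdot H)^2$, and so $c_1^2\le \tfrac{9r^2 H^2}{4}$. For the lower bound I invoke the standard fact that an Ulrich bundle on a polarized smooth projective surface is $\mu_H$--semistable, which puts Bogomolov's inequality $2rc_2\ge (r-1)c_1^2$ at our disposal; combined with $c_2=r+\tfrac12 c_1^2-rH^2$ it rearranges to $c_1^2\ge 2r^2(H^2-1)$.

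Finally, since $H^2>0$, Hodge index shows $H^\perp_{\Q}\subset \Num(Y)_{\Q}$ is negative definite. Writing any admissible $c_1$ as $c_1=\tfrac{3r}{2}H + w$ with $w\in H^\perp_{\Q}$ forces $w^2=c_1^2-\tfrac{9r^2 H^2}{4}$ into a bounded range, and all such $w$ lie in a single coset of $\Num(Y)\cap H^\perp_{\Q}$ inside $H^\perp_{\Q}$. In a translate of a negative definite lattice only finitely many points have bounded square, so there are only finitely many possibilities for $c_1$, and $c_2$ is then determined. The main substantive input is the $\mu_H$--semistability of Ulrich bundles, needed in order to apply Bogomolov.
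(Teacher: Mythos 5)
Your argument is correct, and its skeleton coincides with the paper's: Riemann--Roch applied to $\chi(E(-H))=\chi(E(-2H))=0$ pins $c_1\cdot H=\tfrac{3r}{2}H^2$ and determines $c_2$ from $c_1^2$, the Hodge index theorem gives the upper bound $c_1^2\le \tfrac{9r^2H^2}{4}$ (equivalently $(2c_1-3rH)^2\le 0$), and finiteness then follows from negative definiteness of $H^\perp$. The one place where you genuinely diverge is the lower bound on $c_1^2$: the paper uses the effectivity of $c_2$ as a $0$-cycle class (from the proof of \cite[Proposition 3.1(a)]{CH12}, ultimately because Ulrich bundles are globally generated), which via $(B)$ gives $c_1^2\ge 2r(H^2-1)$; you instead invoke $\mu_H$-semistability of Ulrich bundles (\cite[Theorem 2.9]{CH12}, quoted elsewhere in this paper) together with Bogomolov's inequality $2rc_2\ge (r-1)c_1^2$, which yields the stronger bound $c_1^2\ge 2r^2(H^2-1)$. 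Both inputs are standard facts about Ulrich bundles, so either route is legitimate; yours gives a slightly tighter region for $c_1$ at the cost of a heavier tool (and an implicit characteristic-zero hypothesis for Bogomolov, harmless here), while the paper's $c_2\ge 0$ is more elementary. Your final lattice step, phrased as bounding $w=c_1-\tfrac{3r}{2}H$ in a coset of $\Num(Y)\cap H^\perp_{\Q}$, is just a rescaled version of the paper's argument with the integral class $2c_1-3rH$, and is fine.
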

\begin{proof}  Riemann-Roch and the vanishing conditions $\chi(E(-H))=\chi(E(-2H))=0$ together imply that \begin{equation}\label{Ulrich conditions}(A)~~ c_1(E).H=\frac{3r}{2}H^2\mbox{ and }(B)~~c_2(E)=\frac{1}{2}c_1(E)^2-(H^2-1)r,
\end{equation}
so in particular $c_2(E)$ is determined by $r$ and $c_1(E)$.  From $(A)$ we see that the divisor class $2c_1(E)-3rH$ is orthogonal to $H$ and thus sits in the orthogonal complement of $H$, which is negative definite by the Hodge index theorem.  Furthermore, from the proof of \cite[Proposition 3.1(a)]{CH12}, it follows that $c_2(E)$ represents an effective $0$-dimensional cycle, so $c_2(E)\geq 0$.

\medskip
Solving for $c_1(E)^2$ in $(B)$ and applying the positivity of $c_2(E)$, we get that 
$$
0\geq(2c_1(E)-3rH)^2=4c_1(E)^2-9r^2 H^2\geq -8r+(8r-9r^2)H^2,
$$
where the rightmost bound depends only on $r$ and $H$.  Finiteness easily follows.
\end{proof}

Now we can construct higher-rank bundles.

\begin{Thm}\label{higher rank}
Let $Y$ be an unnodal Enriques surface.  For any $r\geq 1$, there exist rank $r$ stable Ulrich bundles on $Y$ with respect to $H=\F$.  Moreover, the moduli space $M_{H,Y}^U(r,c_1)$ of Ulrich bundles of rank $r$ and first Chern class $c_1$ has dimension $c_1^2-19r^2+1$.  
\end{Thm}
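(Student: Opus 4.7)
The plan is to split the argument in two: a Hirzebruch-Riemann-Roch calculation handles the dimension formula once we know the moduli is smooth at a stable $E$, while existence is produced inductively on $r$ starting from Theorem~\ref{Ulrich line bundle}.

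\emph{Dimension.} Since $\chi(\OO_Y)=1$ and $K_Y\equiv 0$ in $\Num(Y)$, a direct Riemann-Roch computation gives, for any bundle $E$ of rank $r$ with Chern classes $(c_1,c_2)$,
\[
\chi(E,E)=r^2+(r-1)c_1^2-2rc_2.
\]
Substituting the Ulrich relations (\ref{Ulrich conditions}) collapses this to $\chi(E,E)=r^2(2H^2-1)-c_1^2=19r^2-c_1^2$ for $H=\F$. For a stable $E$, $\End(E)=k$, and Serre duality identifies $\Ext^2(E,E)\cong\Hom(E,E\otimes K_Y)^\vee$; this vanishes whenever $E\not\cong E\otimes K_Y$, since both are stable of the same numerical slope. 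For odd $r$ this is automatic from $\det(E\otimes K_Y)=\det(E)\otimes K_Y\neq\det(E)$; for even $r$ the condition $E\cong E\otimes K_Y$ is closed in moduli and will be avoided by the construction below. Hence the moduli is smooth at $[E]$ of dimension $1-\chi(E,E)=c_1^2-19r^2+1$.

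\emph{Inductive construction.} For $r=1$, use Theorem~\ref{Ulrich line bundle}. Given a rank $r-1$ stable Ulrich bundle $F$, pick an Ulrich line bundle $L$ (again from Theorem~\ref{Ulrich line bundle}) with $c_1(L)\cdot c_1(F)$ sufficiently large. The analogous Riemann-Roch computation gives
\[
\chi(L,F)=(r-1)(2H^2-1)-c_1(L)\cdot c_1(F),
\]
which is then negative. Slope-stability of $F$ with $\mu(L)=\mu(F)$ forces $\Hom(L,F)=0$, and the parallel argument applied to $F\otimes K_Y$ gives $\Ext^2(L,F)=\Hom(F,L\otimes K_Y)^\vee=0$, so $\Ext^1(F,L)\neq 0$. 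Any nontrivial extension
\[
0\to L\to E\to F\to 0
\]
produces a rank $r$ Ulrich bundle; Ulrichness follows from the long exact cohomology sequence of $E(-iH)$ for $i=1,2$.

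\emph{Main obstacle: stability.} The hard step is to show that a general such $E$ is stable. The proposed strategy is to first show that the generic extension is simple: an endomorphism of $E$ induces scalar endomorphisms of $L$ and $F$ by simplicity, and for generic extension class the two scalars must agree. Combined with $\Ext^2(E,E)=0$, this makes $[E]$ a smooth point of the moduli of simple Ulrich bundles of the expected dimension $c_1(E)^2-19r^2+1$. Slope stability is then an open condition on this smooth moduli; the $\mu$-unstable locus has positive codimension because any $\mu$-destabilizing subsheaf $G\subset E$ must satisfy $c_1(G).H\geq\tfrac{3\rk(G)}{2}H^2$ together with the Bogomolov inequality, so a Hodge-index argument on the negative-definite orthogonal complement of $H$ (in the spirit of Remark~\ref{rem: finite}) restricts $c_1(G)$ to finitely many numerical classes. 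A general deformation of $E$ thus avoids each such locus and is stable. Finally, since $c_1^2-19r^2+1$ can be made arbitrarily large by varying $c_1$ subject to $c_1.H=15r$, the moduli spaces have unbounded dimension, establishing wild representation type.
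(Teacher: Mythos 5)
Your skeleton (induction on rank, extensions of a stable rank $r-1$ Ulrich bundle by an Ulrich line bundle, simplicity, deformation inside a modular family of simple sheaves) is the same as the paper's, and your Riemann--Roch computations of $\chi(E,E)$ and $\chi(L,F)$ are correct. But there are two genuine gaps at exactly the points where the argument has real content. First, you cannot ``pick an Ulrich line bundle $L$ with $c_1(L)\cdot c_1(F)$ sufficiently large'': by Remark \ref{rem: finite} there are only finitely many Ulrich line bundles for $H=\F$, all with $D.H=15$ and $D^2=18$, so the available intersection numbers $c_1(L)\cdot c_1(F)$ form a fixed finite set and you must \emph{prove} that one of them exceeds $19(r-1)$. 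The paper does this for \emph{every} choice of $L$ via the Hodge index theorem applied to $c_1(\FF)-\rk(\FF)D$ (both classes have slope $15$), combined with the inductive hypothesis $c_1(\FF)^2\geq 19\rk(\FF)^2-1$; this yields $\ext^1(\FF,\OO(D))\geq\frac{17\rk(\FF)^2-1}{2\rk(\FF)}>0$. Without some version of this estimate your construction of nontrivial extensions is unjustified.

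Second, and more seriously, your stability step does not work as stated: the finiteness of numerical classes of potential destabilizing subsheaves does \emph{not} imply that the $\mu$-unstable (or strictly semistable) locus has positive codimension in the modular family $\MM$ --- a priori an entire component of $\MM$ could consist of strictly semistable bundles, each destabilized by a subsheaf of the same fixed numerical type. What is needed, and what the paper supplies, is an actual dimension count: if the generic member of the component containing $\EE$ were not stable, then (using openness of the Ulrich and stable conditions and \cite[Proposition 2.3.1]{HL10} to pin down the Jordan--H\"older type) every bundle in that component would arise from the extension construction, a locus of dimension $\dim M_{H,Y}^U(r-1,c_1(\FF))+\dim M_{H,Y}^U(1,D)+\ext^1(\FF,\OO(D))-1$; comparing this with $\dim\MM=(c_1(\FF)+D)^2-19r^2+1$ shows the deficit is exactly $\ext^1(\FF,\OO(D))>0$, which is a contradiction. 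Note that the positivity you were missing in the first gap is precisely the quantity that makes this comparison strict, so the two gaps are really one missing lemma. (Your treatment of $\Ext^2(E,E)$ for even $r$ --- ``closed in moduli and avoided by the construction'' --- is also unsubstantiated; the paper delegates the smoothness and dimension of the stable locus to \cite{Nue14a}, where the locus $E\cong E\otimes K_Y$ is handled.)
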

\begin{proof}  As noted above, we can certainly construct strictly semistable Ulrich bundles by taking iterated extensions of Ulrich line bundles.  To construct \emph{stable} Ulrich bundles, however, we must work a little harder and use some general machinery, though extensions will be our starting point.  

\medskip
By Theorem \ref{Ulrich line bundle}, we may assume that $r\geq 2$ and that by induction on $r$ we have constructed stable Ulrich bundles of smaller rank.  Take a stable Ulrich bundle $\FF$ of rank $r-1$ and first Chern class $c_1$ and an Ulrich line bundle $\OO(D)$ from Remark \ref{rem: finite} and consider extensions

\begin{equation}\label{ext}
0\to\OO(D)\to\EE\to\FF\to 0,
\end{equation} with the non-split ones parametrized by $\P\Ext^1(\FF,\GG)$.  While if $r>2$ $\FF$ and $\OO(D)$ are automatically not isomorphic, we may simply choose $\FF$ and $\OO(D)$ to be non-isomorphic in case $r=2$, so we assume this to be the case henceforth.  We may also choose $\OO(D)$ such that $\OO(D)\ncong\FF(K_Y)$.  

\medskip
It follows that $\ext^1(\FF,\OO(D))=-\chi(\FF,\OO(D))$ as $\hom(\FF,\OO(D))=0$, because $\FF$ and $\OO(D)$ are non-isomorphic stable sheaves of the same reduced Hilbert polynomial, and $\ext^2(\FF,\OO(D))=\hom(\OO(D),\FF(K_Y))=0$ for the same reason.  By \cite[Proposition 2.12]{CH12}, \begin{equation}\label{ext value}
\ext^1(\FF,\OO(D))=-\chi(\FF,\DD)=c_1(\FF).D-19\rk(\FF).\end{equation}  Since $\FF$ and $\OO(D)$ have the same slope, namely $15$, it follows from the Hodge index theorem that $(c_1(F)-\rk(\FF)D)^2\leq 0$.  By induction, $c_1(\FF)^2\geq 19\rk(\FF)^2-1$, so 
\begin{equation}\label{bound}c_1(\FF).D\geq\frac{c_1(\FF)^2}{2\rk(\FF)}+D^2\rk(\FF)=\frac{c_1(\FF)^2}{2\rk(\FF)}+18\rk(\FF).
\end{equation} Thus we get \begin{equation}\label{positive}
\ext^1(\FF,\OO(D))\geq\frac{17\rk(\FF)^2-1}{2\rk(\FF)}>0,\end{equation} so there exist non-trivial extensions $\EE$.  Since $\FF$ and $\OO(D)$ are non-isomorphic stable bundles of the same slope, such $\EE$ are necessarily \emph{simple}, i.e. $\hom(\EE,\EE)=1$ \cite[Lemma 4.2]{CH12}.  

\medskip
For simple bundles, we can use the existence of a modular family $\MM$ of simple bundles \cite[Proposition 2.10]{CH12} which parametrizes every simple bundle of the given Chern class at least once (but only finitely many times) and most importantly pro-represents the local deformation functor for each simple bundle.  In particular, every rank $r$ stable Ulrich bundle of first Chern class $c_1(\FF)+D$, if it exists, is represented by a point on $\MM$.  Furthermore, the dimension of $\MM$ is given by $$\ext^1(\EE,\EE)=-\chi(\EE,\EE)+1=c_1(\EE)^2-19r^2+1=(c_1(\FF)+D)^2-19r^2+1.$$

\medskip
If the generic point of the component of $\MM$ containing $\EE$ as in \eqref{ext} does not represent a stable sheaf, then the generic point represents strictly semistable Ulrich sheaves with Jordan-H\"{o}lder factors a rank $r-1$ stable Ulrich bundle and an Ulrich line bundle.  Indeed, both being Ulrich and stable are open in families, and by \cite[Proposition 2.3.1]{HL10} only such semistable splitting types can specialize to that of $\EE$.  

\medskip
On the other hand, the dimension those simple bundles $\EE$ coming from the construction described in \eqref{ext} is $$\dim M_{H,Y}^U(r-1,c_1(\FF))+\dim M_{H,Y}^U(1,D)+\dim \P\Ext^1(\FF,\OO(D))=c_1(\FF)^2+c_1(\FF).D-19r^2+19r.$$  That this is strictly smaller than $$\dim\MM=(c_1(\FF)+D)^2-19r^2+1=c_1(\FF)^2+2c_1(\FF).D-19r^2+19,$$ is precisely the positivity statement \eqref{positive} given the equality in \eqref{ext value} and the fact that $\rk(\FF)=r-1$.  Thus the general bundle represented by (this component of) $\MM$ is stable as well.  This concludes the proof that stable Ulrich bundles exist of arbitrary rank.  

\medskip
It only remains to describe the moduli space $M_{H,Y}^U(r,c_1)$ of Ulrich bundles of rank $r$ and first Chern class $c_1$ and its dimension.  To obtain a nice separated moduli space, we simply define $M_{H,Y}^U(r,c_1)$ to be the open locus of Ulrich bundles in the moduli space $M_{H,Y}(r,c_1)$, which parametrizes $S$-equivalence classes of Gieseker semistable sheaves.  Here, two semistable sheaves are called $S$-equivalent if they have the same graded object associated to a Jordan-H\"{o}lder filtration.  From \cite{Nue14a}, we know that locus of stable sheaves $M^s_{H,Y}(r,c_1)$ has the expected dimension $-\chi(\EE,\EE)+1$, and from above, this locus intersects $M_{H,Y}^U(r,c_1)$.  It follows that $\dim M_{H,Y}^U(r,c_1)=c_1^2-19r^2+1$ as claimed.
\end{proof}
\begin{Rem} A similar argument produces stable bundles of any rank that are Ulrich with respect to an arbitrary multiple of a Fano polarization.
\end{Rem}

We obtain the following result of independent interest as an immediate consequence:

\begin{Cor} Unnodal Enriques surfaces are of wild representation type.
\end{Cor}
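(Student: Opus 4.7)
The plan is to observe that the corollary follows almost immediately from Theorem \ref{higher rank} by assembling three standard facts: stable sheaves are indecomposable, Ulrich sheaves are ACM, and the dimensions of the moduli spaces produced by Theorem \ref{higher rank} grow without bound as the rank increases.

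Stable sheaves are simple (any endomorphism $\phi$ of a stable $\EE$ satisfies $\phi=\lambda\cdot\mathrm{id}$, since otherwise $\ker(\phi-\lambda\,\mathrm{id})$ or $\mathrm{im}(\phi-\lambda\,\mathrm{id})$ would destabilize $\EE$), and simple sheaves are automatically indecomposable since a nontrivial direct sum decomposition would yield a nontrivial idempotent. That Ulrich bundles are ACM is classical: starting from the Ulrich vanishing $H^i(\EE(-1))=H^i(\EE(-2))=0$ for all $i$, one combines Castelnuovo--Mumford regularity (giving the vanishing of $H^1(\EE(k))$ for $k\geq -1$) with Serre duality on $Y$ (extending it to $k\leq -2$) to conclude $H^1(\EE(k))=0$ for every $k\in\Zz$, which is precisely the ACM condition on a surface.

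It remains to verify that the dimensions of the moduli spaces grow without bound. Returning to the inductive construction in the proof of Theorem \ref{higher rank}, at rank $r$ it produces a stable Ulrich bundle with first Chern class $c_1=c_1(\FF)+D$ lying on a moduli component of dimension $c_1^2-19r^2+1$. Substituting the inductive hypothesis $c_1(\FF)^2\geq 19(r-1)^2-1$ together with the inequality \eqref{bound} for $c_1(\FF).D$ into the dimension formula yields a lower bound of the form $\dim M^U_{H,Y}(r,c_1)\geq C r$ for some $C>0$ and all sufficiently large $r$. This routine growth estimate is the only substantive (and still quite minor) obstacle; it amounts to expanding $(c_1(\FF)+D)^2$ and tracking the already established inequalities through one line of algebra.

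Assembled together, $Y$ supports, for each $r\geq 1$, a nonempty moduli space of pairwise non-isomorphic indecomposable ACM bundles of rank $r$, whose dimension tends to infinity with $r$. This is exactly the definition of wild representation type recalled in the introduction, so the corollary follows.
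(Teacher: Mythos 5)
Your opening reduction --- stable $\Rightarrow$ simple $\Rightarrow$ indecomposable, Ulrich $\Rightarrow$ ACM, hence it suffices to exhibit moduli of stable Ulrich bundles of unbounded dimension --- is exactly the paper's first sentence, and both standard facts are fine. The substantive divergence is in how the growth is obtained, and this is where your argument has a gap. The paper does \emph{not} extract growth from the general inequalities in the proof of Theorem \ref{higher rank}: it writes $r=2k+\epsilon$, \emph{chooses} the explicit class $c_1=3k\Delta+\epsilon(E_7+E_8+E_9+2E_{10})$ (a sum of $r$ specific Ulrich divisors from Remark \ref{rem: finite}), and computes the dimension exactly as $14k^2+14k\epsilon-\epsilon^2+1$, which grows quadratically --- essentially the maximum allowed, since Hodge index forces $c_1^2\leq (c_1.\Delta)^2/\Delta^2=\tfrac{45}{2}r^2$. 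The choice matters: the induction permits many different sequences of Ulrich line bundles, the resulting $c_1^2$ (hence $c_1^2-19r^2+1$) depends on which ones you sum, and an unfavorable choice (pairwise intersections of the Ulrich divisors near the Hodge-index floor of $18$) pushes $c_1^2$ toward $18r^2$ and the expected dimension below zero, i.e.\ onto components that do not exist. So growth is not automatic for ``whatever $c_1$ the induction produces.''

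Concretely, your ``one line of algebra'' does not go through as described. Expanding $(c_1(\FF)-\rk(\FF)D)^2\leq 0$ gives $c_1(\FF).D\geq \frac{c_1(\FF)^2}{2\rk(\FF)}+\frac{D^2}{2}\rk(\FF)=\frac{c_1(\FF)^2}{2\rk(\FF)}+9\rk(\FF)$ (with $D^2=18$); note the second term of \eqref{bound} as printed is $D^2\rk(\FF)$ rather than $\tfrac12 D^2\rk(\FF)$, and your estimate is sensitive to exactly this constant. Substituting the corrected bound together with $c_1(\FF)^2\geq 19(r-1)^2-1$ into $(c_1(\FF)+D)^2-19r^2+1$ yields only $\geq 1-r-\frac{r}{r-1}$, which is vacuous; the linear bound $\geq 17(r-1)-\frac{r}{r-1}$ you are implicitly after comes out only if one uses the overstated constant. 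There is a repair that stays within your framework: \eqref{ext value} gives the recursion $\dim M^U_{H,Y}(r,c_1(\FF)+D)=\dim M^U_{H,Y}(r-1,c_1(\FF))+2\ext^1(\FF,\OO(D))-1$, so every extension step that actually exists ($\ext^1(\FF,\OO(D))\geq 1$) raises the dimension by at least one, and $\dim\geq r-1$ follows from $\dim M^U_{H,Y}(1,D)=0$. But the positivity of $\ext^1(\FF,\OO(D))$ is itself the delicate point (it is \eqref{positive}, which rests on \eqref{bound}), and it is most easily certified for an explicit sequence of Ulrich divisors such as the paper's. In short: either make an explicit choice of $c_1$ and compute, as the paper does, or prove the required positivity for your chosen extensions; the general inequalities alone do not deliver the growth.
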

\begin{proof} It suffices to demonstrate moduli spaces of stable Ulrich bundles of arbitrarily large dimension, as these contain open subsets parametrizing non-isomorphic stable (and thus indecomposable) ACM sheaves.  So write $r=2k+\epsilon$, where $\epsilon=0,1$, depending on the parity of $r$, and consider $$c_1=3k\Delta+\epsilon(E_7+E_8+E_9+2E_{10}).$$  As $3\Delta=(2E_1+E_2+E_3+E_4)+(-E_1+E_5+\dots+E_{10}),$ it follows that $c_1$ is in the semigroup generated by Ulrich divisors.  From the proof of Theorem \ref{higher rank}, there exist rank $r$ stable Ulrich bundles with first Chern class $c_1$.  These moduli spaces have dimension $$c_1^2-19r^2+1=14k^2+14k\epsilon-\epsilon^2+1,$$ which grows as $r$ grows, so the corollary follows.
\end{proof}
\section{A natural stable rank two Ulrich bundle}\label{sec: rank 2}
We showed above that there exists stable Ulrich bundles of arbitrary rank on $Y$ by taking extensions of Ulrich line bundles.  Here, we use a different method to prove that a certain natural rank two vector bundle on an unnodal Enriques surface $Y$ is Ulrich for the double Fano polarization $H=2\F$.

\begin{Thm}\label{Ulrich rank 2} Let $\F$ be a Fano polarization on an unnodal Enriques surface $Y$.  Then $\EE=\Omega_Y(3\F)$ is Ulrich with respect to the very ample divisor $H=2\F$, where $\Omega_Y$ is the cotagent bundle of $Y$.
\end{Thm}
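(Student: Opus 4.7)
The Ulrich condition \eqref{u} for $\EE = \Omega_Y(3\Delta)$ with respect to $H = 2\Delta$ demands $H^i(\Omega_Y(\Delta)) = H^i(\Omega_Y(-\Delta)) = 0$ for all $i$. Since $\Omega_Y$ has rank two and $\det\Omega_Y = \omega_Y$, one has $(\Omega_Y(\Delta))^\vee \otimes \omega_Y \cong \Omega_Y(-\Delta)$, so Serre duality gives $H^i(\Omega_Y(\Delta))^\vee \cong H^{2-i}(\Omega_Y(-\Delta))$. It therefore suffices to prove $H^i(\Omega_Y(\Delta)) = 0$ for all $i$; a direct Riemann-Roch calculation using $c_1(\Omega_Y) = K_Y$, $c_2(\Omega_Y) = 12$, $\Delta^2 = 10$, and $K_Y\cdot\Delta = 0$ (numerically) yields $\chi(\Omega_Y(\Delta))=0$, consistent with the target vanishing.

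The key input is a line-bundle resolution of $\Omega_Y$ obtained by descent from the \'{e}tale K3 double cover $\pi: X \to Y$, with $X \subset (\P^1)^3$ a smooth $(2,2,2)$ divisor (as in the Appendix) carrying the fixed-point-free deck involution $\sigma$. The conormal sequence for $X \subset (\P^1)^3$ together with $\Omega_{(\P^1)^3} = \bigoplus_{i=1}^3 p_i^*\Omega_{\P^1}$ is $\sigma$-equivariant and reads
\[
0 \to \Oo_X(-2,-2,-2) \to \bigoplus_{i=1}^3 \Oo_X(-2 e_i) \to \Omega_X \to 0.
\]
Pushing forward by $\pi$ and taking the $\sigma$-invariant summand (noting $\pi_*\Omega_X = \Omega_Y \oplus \Omega_Y(K_Y)$) produces a resolution
\[
0 \to L_0 \to L_1 \oplus L_2 \oplus L_3 \to \Omega_Y \to 0
\]
by line bundles on $Y$, where $\pi^* L_0 = \Oo_X(-2,-2,-2)$ and $\pi^* L_i = \Oo_X(-2 e_i)$.

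Twisting by $\Oo(\Delta)$ and using the associated long exact cohomology sequence reduces the theorem to showing that each $L_j(\Delta)$ (for $j=0,1,2,3$) is acyclic, which by Proposition \ref{acyclic} is equivalent to $(L_j+\Delta)^2 = -2$. I compute these using the projection formula and the intersections $h_i^2 = 0$, $h_i\cdot h_j = 2$ (for $i\neq j$) on $X$: the descents $f_i\in\NS(Y)$ of $h_i|_X$ satisfy $f_i^2 = 0$ and $f_i\cdot f_j = 1$ for $i\neq j$, so $(f_1,f_2,f_3)$ is an isotropic triple. Identifying it with three members of an isotropic $10$-sequence realizing $\Delta = \tfrac13\sum_k E_k$ gives $\Delta\cdot f_i = 3$, whence $L_0^2 = 24$, $L_i^2 = 0$, $L_0\cdot\Delta = -18$, and $L_i\cdot\Delta = -6$. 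Therefore $(L_0+\Delta)^2 = 24 - 36 + 10 = -2$ and $(L_i+\Delta)^2 = 0 - 12 + 10 = -2$, as desired, and the long exact sequence forces $H^*(\Omega_Y(\Delta)) = 0$. The main obstacle is the last compatibility check: that the descended isotropic triple $(f_1,f_2,f_3)$ coming from the three $\P^1$-projections genuinely extends to an isotropic $10$-sequence realizing the chosen Fano polarization $\Delta$. This is exactly the lattice-theoretic content provided by the Appendix's analysis of the $(2,2,2)$ model; once it is in hand, every intersection number above is determined and the proof concludes.
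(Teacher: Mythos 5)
Your proposal is correct and follows essentially the same route as the paper: the $\sigma$-equivariant conormal sequence of the $(2,2,2)$ model descends to the line-bundle resolution $0\to\Oo_Y(-2E_1-2E_2-2E_3)\to\bigoplus_i\Oo_Y(-2E_i)\to\Omega_Y\to 0$, acyclicity of $\Omega_Y(\Delta)$ then follows from Proposition \ref{acyclic} since all four twisted line bundles have self-intersection $-2$ (your intersection numbers match the paper's), and Serre duality with $\Omega_Y^\vee\cong\Omega_Y\otimes K_Y$ handles $\Omega_Y(-\Delta)$. The ``compatibility check'' you flag as the main obstacle dissolves once the construction is ordered as in the paper: one starts from the isotropic $10$-sequence defining $\Delta$ (so $\Delta\cdot E_i=3$ by definition), takes $E_1,E_2,E_3$ from it as the three half-pencils, and only then builds the $(2,2,2)$ model, rather than descending an isotropic triple and trying to fit it into a $10$-sequence afterwards.
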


\begin{proof}
In order to prove Theorem \ref{Ulrich rank 2}, we first recall a geometric construction of $Y$ as a free quotient of an invariant $(2,2,2)$ divisor in $(\Pp^1)^3$.  

\medskip
Let $E_1$, $E_2$ and $E_3$ be three half-pencils on $Y$ with $E_i.E_j=1-\delta_{i,j}$ in agreement with the notations of the previous section. Let $\pi:X\to Y$ be the K3 double cover of $Y$. For the pullbacks $\pi^*E_i$ on $X$, the linear systems $|\pi^*E_i|$ are base-point free pencils which give morphisms $X\to \Pp^1$ whose generic fibres are smooth elliptic curves. By combining these together we get a morphism
$$
f:X\to (\Pp^1)^3
$$
which is a closed embedding with image a $(2,2,2)$ divisor on $(\Pp^1)^3$, see the appendix, Section \ref{app: toric}, for details.
The line bundles $\pi^* E_i$ can be linearized with respect to the covering involution $\sigma:X\to X$.
This gives a linearization of $\Oo(1,1,1)$ on $(\Pp^1)^3$ and $X$ can be identified as the zero locus of a $\sigma$-invariant section of the $(2,2,2)$ line bundle on $(\Pp^1)^3$.

\medskip
The conormal exact sequence for $X\subset (\P^1)^3$, $$0\to \OO_X(-2,-2,-2)\to\Omega_{(\P^1)^3}|_X\to\Omega_X\to 0,$$ can be written 
$$
0\to\OO_X(-2,-2,-2)\to\OO_X(-2,0,0)\oplus\OO_X(0,-2,0)\oplus\OO_X(0,0,-2)\to\Omega_X\to 0,
$$
as $\Omega_{(\P^1)^3}\cong p_1^*\Omega_{\P^1}\oplus p_2^*\Omega_{\P^1}\oplus p_3^*\Omega_{\P^1}$.  Pushing forward and taking $\sigma$-invariant components, we get a resolution 
\begin{equation}\label{resolution2}0\to\OO_Y(-2E_1-2E_2-2E_3)\to\OO_Y(-2E_1)\oplus\OO_Y(-2E_2)\oplus\OO_Y(-2E_3)\to\Omega_Y\to 0
\end{equation}  
which will be the key to our verification of the Ulrich property.

\medskip
When we twist \eqref{resolution2}  by $\F$, we see that $\EE(-H)=\Omega_Y(\F)$ is resolved by $\displaystyle\bigoplus_{i=1}^3\OO_Y(\F-2E_i)$ and $\OO_Y(\F-2E_1-2E_2-2E_3)$.
Observe that $(\F-2E_i)^2=-2=(\F-2E_1-2E_2-2E_3)^2$. 
By Proposition \ref{acyclic}, each of the line bundles $\OO_Y(\F-2E_i),i=1,2,3$, and $\OO_Y(\F-2E_1-2E_2-E_3)$ has zero cohomology groups.  It follows that $\Omega_Y(\F)$ is acyclic. 

\medskip
To prove that $\EE$ is Ulrich it only remains to show that $\EE(-2H)=\Omega_Y(-\F)$ is acyclic.  From Serre duality and the fact that $\Omega_Y^\vee = \Omega_Y\otimes K_Y$, it follows that
$$
h^i(\Omega_Y(-\F))= h^{2-i}(K_Y\otimes \Omega_Y^\vee(\F)) = h^{2-i}(\Omega_Y(\F))=0,
$$
by the previous paragraph.
\end{proof}

\medskip
In the remainder of this section we will show that the Ulrich bundle $\Omega_Y(3\F)$ is $\mu$-stable 
with respect to the Fano polarization $\F$. 

\begin{Thm}\label{stable}
The Ulrich bundle $\Omega_Y(3\F)$ on an unnodal Enriques surface $Y$ is $\mu$-stable with respect 
to the polarization $\F$.
\end{Thm}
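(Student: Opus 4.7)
The plan is to reduce to showing that $\Omega_Y$ itself is $\mu$-stable with respect to $\Delta$, since twisting by the line bundle $\OO(3\Delta)$ preserves $\mu$-stability. As the slope to beat is $\mu(\Omega_Y) = \tfrac12 K_Y \cdot \Delta = 0$, I would assume for contradiction that $\OO_Y(L) \hookrightarrow \Omega_Y$ is a saturated rank one subsheaf with $L \cdot \Delta \geq 0$, and test $L$ against the ten elliptic fibrations afforded by the Fano basis $E_1, \dots, E_{10}$.

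The key geometric input is that for each half-pencil $E_i$, $|2 E_i|$ is a base point free pencil whose general member $C$ is a smooth curve of arithmetic genus one. The conormal sequence
\[
0 \to N_{C/Y}^\vee \to \Omega_Y|_C \to \Omega_C \to 0,
\]
combined with $C^2 = 0$ (so $N_{C/Y}^\vee \cong \OO_C$) and $\omega_C \cong \OO_C$, exhibits $\Omega_Y|_C$ as an extension of $\OO_C$ by $\OO_C$; in particular it is $\mu$-semistable of degree zero on the elliptic curve $C$. Choosing $C$ generic enough to avoid the zero-dimensional locus where the torsion free quotient $\Omega_Y / \OO_Y(L)$ is not locally free, the restriction $\OO_Y(L)|_C = \OO_C(2 L \cdot E_i)$ sits as a line subbundle of $\Omega_Y|_C$, and so must have non-positive degree. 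This yields $L \cdot E_i \leq 0$ for every $i = 1,\dots,10$.

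A short lattice computation then finishes the argument. Writing $L = \sum_j a_j E_j$ in the rational span of the Fano basis, one has $L \cdot \Delta = 3 \sum_j a_j$ and $L \cdot E_i = \sum_j a_j - a_i$. The inequalities $L \cdot \Delta \geq 0$ and $L \cdot E_i \leq 0$ force $a_i \geq \sum_j a_j \geq 0$ for every $i$; summing the former over $i$ gives $\sum_j a_j \geq 10 \sum_j a_j$, so in fact $\sum_j a_j = 0$, after which $a_i = 0$ for every $i$. Thus $L$ is numerically trivial, i.e.\ $L$ equals $0$ or $K_Y$ in $\Pic(Y)$. The case $L = 0$ yields $H^0(\Omega_Y) \neq 0$, contradicting $q(Y) = 0$; the case $L = K_Y$ yields $H^0(\Omega_Y \otimes K_Y^{-1}) = H^0(T_Y) \neq 0$, contradicting the absence of holomorphic vector fields on an Enriques surface (which follows, for instance, from $\pi^\ast T_Y = T_X$ and $h^0(T_X) = 0$ on the K3 cover). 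The only step requiring real care is verifying the $\mu$-semistability of $\Omega_Y|_C$ on a generic elliptic fiber, and that drops out of the conormal sequence above.
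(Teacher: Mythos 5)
Your proof is correct, but it takes a genuinely different route from the paper. The paper also reduces to the $\mu$-stability of $\Omega_Y$ and also begins with a rank one subsheaf $\Oo(D)$ of slope zero, but it then exploits the dual of the resolution \eqref{resolution2} coming from the $(2,2,2)$ model, namely $0\to \Omega_Y \to \bigoplus_i\Oo(2E_i+K_Y)\to \Oo(2E_1+2E_2+2E_3+K_Y)\to 0$, reducing the problem to finding a triple $\{i,j,k\}$ with $h^0(2E_i+K_Y-D)=h^0(2E_j+K_Y-D)=h^0(2E_k+K_Y-D)=0$; unnodality converts $h^0(2E_t+K_Y-D)>0$ into $(2E_t-D)^2\ge 0$, and a lattice estimate shows at most five coefficients of $D$ are nonzero, so a suitable triple exists. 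You instead restrict the destabilizing line bundle to general members $C\in|2E_i|$ and use that $\Omega_Y|_C$ is a slope-zero semistable extension of $\Oo_C$ by $\Oo_C$ (via the conormal sequence of the fiber, with $N_{C/Y}\cong\Oo_C$), obtaining the ten inequalities $L\cdot E_i\le 0$, after which your lattice argument is essentially trivial. Your route avoids the $(2,2,2)$ model of the appendix entirely and is a clean instance of the standard ``restrict to fibers'' technique for stability of cotangent bundles of elliptic surfaces, at the cost of needing \emph{all ten} classes $E_i$ of the Fano isotropic sequence to be represented by half-pencils; this is a standard fact for unnodal Enriques surfaces (every effective primitive isotropic class is nef and a half-fiber), but you should state and cite it, since the paper only introduces three half-pencils $E_1,E_2,E_3$ and three would not suffice for your lattice step (e.g.\ $L=E_4-E_5$ satisfies $L\cdot\Delta=0$ and $L\cdot E_i=0$ for $i=1,2,3$). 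The remaining details you flag --- exactness of the restricted sequence along a general fiber avoiding the finite non-locally-free locus of the quotient, and the final exclusion of $L=0$ and $L=K_Y$ via $h^0(\Omega_Y)=h^0(T_Y)=0$ --- are handled correctly.
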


\begin{proof}
To show that $\Omega_Y(3\F)$ is $\mu$-stable, it suffices to prove that $\Omega_Y$ is $\mu$-stable.  Furthermore, as Ulrich bundles are always Gieseker semistable (and thus $\mu$-semistable) \cite[Theorem 2.9]{CH12}, we may assume that we have a rank 1 subsheaf $\Ll\subset\Omega_Y$ with $\mu_{\F}(\LL)=\mu_{\F}(\Omega_Y)=0$.  We may, of course, assume that $\Ll=\Oo(D)$ is a line bundle since the line bundle $\LL^{\vee\vee}$ has the same slope and is also a subsheaf of $\Omega_Y$.  Thus we are interested in the case $D.\F= 0$. 

\medskip
If we have an embedding $\Oo(D)\to \Omega_Y$, then we have a nonzero global section
of $\Omega_Y(-D)$. Since $h^0(\Omega_Y ) =h^0(\Omega^\vee_Y)=0$, we may assume that 
$D$ is not numerically trivial.
As discussed in the proof of Theorem \ref{Ulrich rank 2}, for any triple of distinct indices $\{i,j,k\}\subset \{1,\ldots,10\}$
we have a short exact sequence 
$$
0\to \Oo(-2E_i-2E_j-2E_k) \to \Oo(-2E_i)\oplus \Oo(-2E_j)\oplus \Oo(-2E_k) \to \Omega_Y \to 0
$$
which can be dualized and twisted by the canonical class to give
$$
0\to \Omega_Y \to  \Oo(2E_i+K)\oplus \Oo(2E_j+K)\oplus \Oo(2E_k+K)\to 
\Oo(2E_i+2E_j+2E_k+K)\to 0.
$$
Thus it suffices to show that there is a triple of indices $i,j,k$ such that 
\begin{equation}\label{all0}
h^0(2E_i+K-D)=h^0(2E_j+K-D)=h^0(2E_k+K-D)=0.
\end{equation}

\medskip
Suppose that there exists some $t$ such that $h^0(2E_t+K-D)>0$. By unnodality of $Y$ this implies 
$$
(2E_t-D)^2\geq 0.
$$
We know that 
$$
D=\sum_{i=1}^{10} a_i E_i +\mbox{torsion}
$$
with $a_i\in \frac 13\Zz, a_i-a_j\in \Zz$. From $0=\frac 13D.F = \sum_{i=1}^{10}a_i=0$
we conclude that $a_i\in \Zz$ for all $i$.

\medskip
The condition $(2E_t-D)^2\geq 0$ implies 
$$
0\leq -(2-a_t)\sum_{i\neq t}a_i + \sum_{i<j, i\neq t,j\neq t}a_ia_j =
-(2+s) s + \frac 12(s^2 - \sum_{i\neq t}a_i^2)
$$
where $s=\sum_{i\neq t}a_i$. We have used $a_t=-s$ from $D.F=0$. This 
translates into 
$$
(s+2)^2+\sum_{i\neq t} a_i^2  \leq 4.
$$
We can make two observations. First of all, there are at most $5$ nonzero $a_i$ (including $a_t$). Second, if $a_t=0$, then all other $a_i$ are zero, which contradicts $D$ being nontorsion. Now the first observation allows us to pick 
$(i,j,k)$ such that $a_i=a_j=a_k=0$ and then the second observation ensures \eqref{all0}.

\medskip
Therefore, $\Omega_Y$ is $\mu_{\F}$-stable.
\end{proof}

\section{Appendix: A toric projective model of $Y$}\label{app: toric}
In this section we show that the K3 cover $X$ of an Enriques surface $Y$ can be represented as a hypersurface in $\P^1\times\P^1\times\P^1$ of tridegree $(2,2,2)$.  

\medskip
To this end, let $E_1,E_2,E_3$ be three halfpencils on an Enriques surface $Y$ such that $E_i.E_j=1-\delta_{ij}$.  Denote their pull-backs under $\pi:X\to Y$ by $F_i=\pi^*E_i$.  Each $F_i$ induces an elliptic fibration $\pi_i:X\to\P^1$.  Indeed, $$h^i(X,\OO(F_i))=h^i(Y,\pi_*(\OO(F_i)))=h^i(E_i)+h^i(E_i+K_Y),$$ so $h^i(X,\OO(F_i))=0$ for $i>0$ and $h^0(X,\OO(F_i))=2$.  Basepoint-freeness follows from $F_i^2=0$.  Consider the map $\phi=\pi_1\times\pi_2\times\pi_3:X\to\P^1\times\P^1\times\P^1$.  

\medskip
Define $Q:=\phi(X)$, so that $Q$ is a hypersurface of $(\P^1)^3$.  We would like to show that $Q$ has tridegree $(2,2,2)$.  First notice that any curve contracted by $\phi$ would have to be one of the fibers of $\pi_i$ for some $i$.  But as $F_j.F_i=1\neq 0$, $\phi$ could never contract such a curve.  So $\phi$ is certainly a finite morphism.  We may, without loss of generality, consider a line of the form $\{p\}\times\{q\}\times\P^1$ for fixed $p,q\in\P^1$.  Then $\phi^{-1}(\{p\}\times\{q\}\times\P^1)=\pi_1^{-1}(p)\cap\pi_2^{-1}(q)$, which (generically) consists of two distinct points as $F_1.F_2=2$ \cite[Section 18]{BHPV}.  

\medskip
Suppose that for generic choices of $p$ and $q$, $Q\cap (\{p\}\times\{q\}\times\P^1)$ does not consist of two distinct points, but instead of a single point $(p,q,r)$ whose preimage under $\phi$ consists of two reduced points.  Then $F_3^r\cap F_1^p\cap F_2^q=F_1^p\cap F_2^q$ consists of two points, where we denote by $F_i^t$ the elliptic fibre of $|F_i|$ over the point $t\in\P^1$.  Consider the specific case where $F_1^p=\pi^*E_1$ and $F_2^q=\pi^*E_2$.  Then $F_1^p\cap F_2^q$ consists of two distinct points sitting over the unique point of intersection of $E_1$ and $E_2$.  By \cite[Proposition 2.2]{Ume15}, we may choose the half-pencils so that $E_1\cap E_2\cap(E_3+E_3')=\varnothing$, where $E_3'=E_3+K_Y$ is the adjoint half pencil.  It follows that the point $r\in\P^1$ above cannot be either of the points corresponding to the preimages of $E_3$ and $E_3'$.  From \cite[Remark after Lemma 17.3]{BHPV}, we see that other than the two double fibres $2E_3$ and $2E_3'$, whose preimages are irreducible, the preimages of the other curves in $|2E_3|$ consist of two disjoint, isomorphic curves which are switched by the covering involution of $\pi:X\to Y$.  Since $F_3^r\cap F_1^p\cap F_2^q=F_1^p\cap F_2^q$ and since $r$ must lie over a reduced fibre of $|2E_3|$, then we get an immediate contradiction as the two points of intersection must simultaneously have the same and distinct images in $Y$ under $\pi$.  Thus $(\{p\}\times\{q\}\times\P^1)\cap Q$ consists of two distinct points, and as this must then be the generic the case, it follows that $Q$ has tridegree $(2,2,2)$ so that $\phi$ is generically one-to-one.  It follows that $\phi$ is birational.  

\medskip
Now we notice that $H^0(Q,\OO(k,k,k))\into H^0(X,k(F_1+F_2+F_3))$ and both have the same dimension, namely $6k^2+2$, for all $k\geq 1$, so from the equality of Hilbert polynomials it follows that $\phi$ is an isomorphism onto its image, the $(2,2,2)$ divisor $Q$..  

\medskip
Keeping with the notation above, let $E_i':=E_i+K_Y$ be the adjoint halfpencil to $E_i$ and define $F_i':=\pi^*E_i'$.  Then we may choose sections $g_i,g_i'\in H^0(X,\OO_X(F_i)$ for $F_i$ and $F_i'$, respectively, such that the covering involution $\iota$ acts on these sections via $\iota^*(g_i)=g_i,\iota^*(g_i')=-g_i'$.  Consequently we can choose tri-homogeneous coordinates $([u_0:u_1],[v_0:v_1],[w_0:w_1])$ on $\P^1\times\P^1\times\P^1$ such that $\pi_i(x)=[g_i(x):g_i'(x)]$.  Then we define an involution $\tau$ on $(\P^1)^3$ by $\tau([u_0:u_1],[v_0:v_1],[w_0:w_1])=([u_0:-u_1],[v_0:-v_1],[w_0:-w_1])$ so that the embedding $\phi$ is $\Z/2\Z$-invariant for the actions of $\iota$ and $\tau$.  

\medskip
There are eight fixed points of $\tau$, \begin{align*}&([1:0],[1:0],[1:0]),([1:0],[1:0],[0:1]),([1:0],[0:1],[1:0]),([1:0],[0:1],[0:1]),\\&([0:1],[1:0],[1:0]),([0:1],[1:0],[0:1]),([0:1],[0:1],[1:0]),([0:1],[0:1],[0:1]).\end{align*}  As $X=Q$ has no fixed points, it cannot pass through these eight points and the defining $(2,2,2)$ equation of $Q$ must be invariant. 

\medskip
It is clear that conversely, any such $(2,2,2)$ $\tau$-invariant divisor whose vanishing locus $Q$ avoids the 8 fixed points of $\tau$ and is a smooth irreducible surface is the double cover of an Enriques surface $Y$.

Altogether this proves:
\begin{Thm}\label{represent} Ley $Y$ be an unnodal Enriques surface and $E_1,E_2,E_3$ be three halfpencils such that $E_i.E_j=1$ for $i\neq j$.  With $\iota$ and $\tau$ defined as above, there is a $\tau$-invariant trihomogeneous polynomial of tridegree $(2,2,2)$ in $[u_0:u_1],[v_0:v_1],[w_0:w_1]$, with zero-set $Q$ isomorphic to the universal K3 covering $X$ of $Y$ such that involution $\sigma$ is induced by the involution $\tau$ on $\P^1\times\P^1\times\P^1$.  The three rulings of $(\P^1)^3$ define the three elliptic pencils $|F_i|$ on $X$.  Conversely, any $\tau$-invariant trihomogenous polynomial of tridegree $(2,2,2)$ which avoids the fixed points of $\tau$ and defines a smooth irreducible surface $Q$ gives rise to the universal K3 cover of an Enriques surface $Y=Q/\tau$.  
\end{Thm}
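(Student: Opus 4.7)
The plan is to construct the map explicitly as the product of the three elliptic fibrations attached to the half-pencils, then show it embeds $X$ as a $(2,2,2)$ divisor, and finally exhibit the correct equivariance with respect to the covering involution.

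First I would build the morphism $\phi=\pi_1\times\pi_2\times\pi_3\colon X\to(\P^1)^3$, where $\pi_i\colon X\to\P^1$ is the elliptic fibration associated to the pencil $|F_i|=|\pi^*E_i|$. The pushforward identity $\pi_*\OO_X(F_i)=\OO_Y(E_i)\oplus\OO_Y(E_i+K_Y)$ together with the vanishing $h^1(E_i)=h^1(E_i+K_Y)=0$ gives $h^0(X,\OO_X(F_i))=2$ and higher cohomology zero, while $F_i^2=0$ provides base-point freeness. Finiteness of $\phi$ follows because the only curves contracted by $\pi_i$ would be fibers of $|F_i|$, but these intersect $F_j$ positively for $j\neq i$, so they cannot be simultaneously contracted by $\pi_j$.

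Next I would identify the image $Q=\phi(X)$ as a divisor of tridegree $(2,2,2)$. For fixed generic $p,q\in\P^1$, the fiber $\phi^{-1}(\{p\}\times\{q\}\times\P^1)=\pi_1^{-1}(p)\cap\pi_2^{-1}(q)$ consists of $F_1.F_2=2$ points. The hard step is to rule out the possibility that these two points map to the same point of $(\P^1)^3$, which would force $\phi$ to be two-to-one rather than birational. The obstruction is that if this happened, then the unique image point $(p,q,r)$ would give $F_3^r\cap F_1^p\cap F_2^q=F_1^p\cap F_2^q$; specializing $(p,q)$ to the images of the half-pencils $E_1,E_2$, I would invoke [Ume15, Proposition 2.2] to choose half-pencils with $E_1\cap E_2\cap(E_3+E_3')=\varnothing$, and then use the description of the preimages of fibers of $|2E_3|$ (from [BHPV, Lemma 17.3]) as pairs of disjoint curves exchanged by $\iota$ to derive a contradiction. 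This shows $\phi$ is generically one-to-one, hence birational onto a $(2,2,2)$ divisor $Q$. To upgrade birational to an isomorphism, I would compare Hilbert polynomials: $H^0(Q,\OO(k,k,k))\hookrightarrow H^0(X,kF_1+kF_2+kF_3)$, and a Riemann--Roch computation on both sides yields the common value $6k^2+2$, forcing the inclusion to be an equality for all $k$, whence $\phi$ is a closed embedding.

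Finally I would handle the equivariance. The involution $\iota$ lifts to an action on each $H^0(X,\OO_X(F_i))=H^0(E_i)\oplus H^0(E_i+K_Y)$ which splits as $(+)\oplus(-)$ eigenspaces; choosing the corresponding eigenbases $g_i,g_i'$ produces coordinates $([u_0{:}u_1],[v_0{:}v_1],[w_0{:}w_1])$ in which $\iota$ becomes $\tau([u_0{:}u_1],[v_0{:}v_1],[w_0{:}w_1])=([u_0{:}-u_1],[v_0{:}-v_1],[w_0{:}-w_1])$. The defining equation of $Q$ is then $\tau$-invariant, and because $Y$ is a free quotient of $X$ the eight $\tau$-fixed points cannot lie on $Q$. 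For the converse, given a smooth irreducible $\tau$-invariant $(2,2,2)$ surface $Q$ avoiding the eight fixed points, $\tau|_Q$ is a free involution on a smooth surface with trivial canonical class (by adjunction), so $Q$ is K3 and $Q/\tau$ is an Enriques surface; the three coordinate projections supply the three half-pencils.

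The main obstacle is the step excluding the two-to-one degeneration of $\phi$: it requires combining a subtle non-incidence statement for half-pencils on an unnodal Enriques surface with the geometry of the ramification behavior of $\pi$ over the double fibers of $|2E_3|$, and nothing about this is formal from the numerical data alone.
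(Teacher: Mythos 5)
Your proposal follows the paper's argument essentially step for step: the same map $\phi=\pi_1\times\pi_2\times\pi_3$, the same exclusion of the two-to-one degeneration via the non-incidence of half-pencils and the structure of preimages of fibers of $|2E_3|$, the same Hilbert-polynomial comparison giving $6k^2+2$, and the same eigenbasis construction of the $\tau$-equivariance together with the adjunction argument for the converse. This matches the paper's proof in both strategy and detail.
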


Let us observe that the vector space of $\tau$-invariant trihomogeneous polynomials of tridegree $(2,2,2)$ is 14-dimensional and spanned by the eight monomials $u_0^{2i} u_1^{2-2i}v_0^{2j}v_1^{2-2j}w_0^{2k}w_1^{2-2k}$ for $i,j,k=0,1$ and the six monomials $u_0u_1v_0v_1w_0^{2k}w_1^{2-2k},u_0u_1v_0^{2j}v_1^{2-2j}w_0w_1,u_0^{2i}u_1^{2-2i}v_0v_1w_0w_1$.  This defines a linear system without base points, so by Bertini's the generic such $(2,2,2)$ divisor is smooth and irreducible.  Furthermore, the sublinear system consisting of $\tau$-invariant $(2,2,2)$ divisors passing through one of the fixed points of $\tau$ has codimension one, so the generic $\tau$-invariant $(2,2,2)$ divisor is smooth, irreducible, and avoids all of the eight fixed points.  

\medskip
It follows from Theorem \ref{represent} that any unnodal Enriques surface $Y$ can be embedded as a divisor in the toric variety $(\P^1)^3/\tau$.  While we have not seen the description here in the literature, it is intimately related to one of the most classical descriptions of Enriques surfaces as the normalization of sextic surfaces in $\P^3$ passing doubly through the coordinate tetrahedron \cite[p. 635]{GH94}, i.e. as the normalization of the hypersurface in $\P^3$ defined by $$x^2y^2z^2 +x^2y^2w^2+x^2z^2w^2+y^2z^2w^2+xyzwQ(x,y,z,w)=0.$$  Indeed, these \emph{Enriques representations}, as they were called in \cite{BP83}, correspond to the linear system $|E_1+E_2+E_3|$ whose pull-back defined $\phi:X\into (\P^1)^3$ above.  

\medskip
The picture below relates more directly the classical Enriques representation and the representation as a quotient of an invariant 
$(2,2,2)$ by an involution.

\[
\begin{tikzpicture}[scale=1]
\draw[fill]( 0,0) circle(.06);
\draw(1,1) circle(.02);
\draw[fill]( 2,2) circle (.06);
\draw(-1.5,.8) circle(.02);
\draw[fill]( -3,1.6) circle (.06);
\draw(-.2,-1.2) circle(.02);
\draw[fill]( -.4,-2.4) circle (.06);

\draw[fill]( -.4+2,-2.4+2) circle (.06);
\draw[fill]( -.4-3,-2.4+1.6) circle (.06);
\draw[fill]( 2-3,2+1.6) circle (.06);
\draw( 2-3-.4,2+1.6-2.4) circle (.06);
\draw[fill](-.2-1.5,-1.2+.8)circle(.06);
\draw[fill](1-1.5,1+.8)circle(.06);
\draw[fill](-.2+1,-1.2+1)circle(.06);
\draw(-.2+1-3,-1.2+1+1.6)circle(.06);
\draw(1-1.5-.4,1+.8-2.4)circle(.06);
\draw(-.2-1.5+2,-1.2+.8+2)circle(.06);

\draw(-.2+2,-1.2+2) circle(.02);
\draw(-.2-3,-1.2+1.6) circle(.02);
\draw(-.2-3+2,-1.2+1.6+2) circle(.02);
\draw(-1.5+2,.8+2) circle(.02);
\draw(-1.5-.4,.8-2.4) circle(.02);
\draw(-1.5-.4+2,.8-2.4+2) circle(.02);
\draw(1-3,1+1.6) circle(.02);
\draw(1-.4,1-2.4) circle(.02);
\draw(1-.4-3,1-2.4+1.6) circle(.02);

\draw (0,0) -- (2,2);
\draw (0,0) -- (-3,1.6);
\draw (0,0) -- (-.4,-2.4);
\draw (2-3,2+1.6) -- (2,2);
\draw (2-3,2+1.6) -- (-3,1.6);
\draw [dashed](2-3,2+1.6) -- (2-3-.4,2+1.6-2.4);
\draw (-.4+2,-2.4+2) -- (-.4,-2.4);
\draw (-.4-3,-2.4+1.6) -- (-.4,-2.4);
\draw (-.4-3,-2.4+1.6) -- (-3,1.6);
\draw[dashed] (-.4-3,-2.4+1.6) -- (-.4-3+2,-2.4+1.6+2);
\draw (-.4+2,-2.4+2) -- (2,2);
\draw[dashed] (-.4+2,-2.4+2) -- (-.4-3+2,-2.4+1.6+2);

\node[left] at (0,0) {$u_0^2v_0^2w_0^2~$};
\node[ right] at (2,2) {$u_1^2v_0^2w_0^2~$};
\node[ left] at (-3,1.6) {$u_0^2v_1^2w_0^2~$};
\node[right] at (-.4,-2.4) {$u_0^2v_0^2w_1^2~$};

\node[right] at ( -.4+2,-2.4+2) {$u_1^2v_0^2w_1^2~$};
\node[left] at  (-.4-3,-2.4+1.6) {$u_0^2v_1^2w_1^2~$};
\node[left] at  ( 2-3,2+1.6) {$u_1^2v_1^2w_0^2~$};

%\node[below ]at  (-.2-1.5,-1.2+.8) {$u_0^2v_0v_1w_0w_1~$};
\node[above] at  (1-1.5,1+.8) {$u_0u_1v_0v_1w_0^2~$};

\draw[fill]( 8+0,0) circle(.06);
\draw[fill]( 8+2,2) circle (.06);
\draw[fill]( 8-3,1.6) circle (.06);
\draw[fill]( 8-.4,-2.4) circle (.06);

\draw[fill]( 8-.4+2,-2.4+2) circle (.06);
\draw[fill]( 8-.4-3,-2.4+1.6) circle (.06);
\draw[fill]( 8+2-3,2+1.6) circle (.06);
\draw( 8+2-3-.4,2+1.6-2.4) circle (.06);
\draw[fill](8-.2-1.5,-1.2+.8)circle(.06);
\draw[fill](8+1-1.5,1+.8)circle(.06);
\draw[fill](8-.2+1,-1.2+1)circle(.06);
\draw(8-.2+1-3,-1.2+1+1.6)circle(.06);
\draw(8+1-1.5-.4,1+.8-2.4)circle(.06);
\draw(8-.2-1.5+2,-1.2+.8+2)circle(.06);

\draw (8+0,0) -- (8+2-.4,2-2.4);
\draw (8+0,0) -- (8+2-3,2+1.6);
\draw (8+0,0) -- (8-.4-3,-2.4+1.6);
\draw (8+2-.4,2-2.4) -- (8-.4-3,-2.4+1.6);
\draw (8+2-.4,2-2.4) -- (8+2-3,2+1.6);
\draw (8-.4-3,-2.4+1.6) -- (8+2-3,2+1.6);

\node[above left] at (8 ,0) {$x^3yzw~$};
\node[right] at (8 -.4+2,-2.4+2) {$xy^3zw$};
\node[ left] at  (8 -.4-3,-2.4+1.6) {$xyzw^3$};
\node[left] at  ( 8+2-3,2+1.6) {$xyz^3w$};

\node[left] at (8+2,2){$x^2y^2z^2$};
\node[left] at (8-3,1.6){$x^2z^2w^2$};
\node[ left] at (8-.4,-2.4){$x^2y^2w^2$};
\end{tikzpicture}
\]

\smallskip
In the left panel we indicate invariant monomials
of tridegree $(2,2,2)$ on $(\Pp^1)^3$ with coordinates
$$
([u_0:u_1],[v_0:v_1],[w_0:w_1])
$$
with respect to the involution that fixes $u_0,v_0,w_0$ and negates
$u_1,v_1,w_1$.
There are $14$ such monomials that
can be thought of as vertices and midpoints of facets of a size two cube.
For clarity, only
some of the monomials are marked in the picture. The non-filled circles
indicate monomials
that are blocked from the view.

\smallskip
In the right panel, we write the same monomials as monomials on $\Pp^3$
with coordinates
$[x:y:z:w]$. There is a size two tetrahedron there whose monomials
correspond to those appearing in 
$xyzwQ(x,y,z,w)$ in the usual equation of Enriques surface
$$
x^2y^2z^2+x^2y^2w^2+x^2z^2w^2+y^2z^2w^2+xyzwQ(x,y,z,w) = 0.
$$
Clearly, one can make the four coefficients to be $1$ by scaling the
coordinates.
\bibliographystyle{plain}
\bibliography{NSF_Research_Proposal}
\end{document}